\newtheorem{thm}{Theorem}[section] 
\newtheorem{lemma}[thm]{Lemma} 
\newtheorem{prop}[thm]{Proposition} 
\theoremstyle{definition}
\newtheorem{defn}[thm]{Definition} 
\newtheoremstyle{TheoremNum}{\topsep}{\topsep}{\itshape}{}{\bfseries}{.}{ }{\thmname{#1}\thmnote{ \bfseries #3}}                                                       
\theoremstyle{TheoremNum}
\newtheorem{thmn}{Theorem}
\renewcommand{\author}[1]{\large\rm #1\\ \bigskip}
\renewcommand{\title}[1]{\bigskip\bigskip\Large\bf #1\bigskip\bigskip\\}
\begin{document}

\vglue .3 cm

\begin{center}

\title{Permutations sortable by deques and two stacks in parallel share the same growth rate.}

\author{Andrew Elvey Price\footnote[1]{email:andrew.elvey@univ-tours.fr\newline This research was supported by the European Research Council (ERC) in the European Union's Horizon 2020 research and innovation programme, under the Grant Agreement No. 759702.}
}

LaBRI, Universit\'e de Bordeaux, France.

\end{center}
\setcounter{footnote}{0}

\begin{abstract}

Recently Albert and Bousquet-M\'elou obtained the solution to the long-standing problem of the enumeration of permutations sortable by two stacks in parallel (2sip). Their solution was expressed in terms of functional equations. E.P. and Guttmann then showed that the equally long-standing problem of the number of permutations sortable by a double ended queue (deque) can be simply related to the solution of the same functional equations. They then conjectured that the radius of convergence of both generating functions is the same, and reduced this conjecture to a series of conjectures of Albert and Bousquet-M\'elou regarding a generating function for quarter-plane loops. In this note we prove that the two growth rates are equal, using a combinatorial argument on certain lattice paths which are in bijection with the two classes. As a corollary we prove that the generating function $P(t)$ for permutations sortable by two stacks in parallel satisfies an inequality which was conjectured by Albert and Bousquet-M\'elou.
\end{abstract}

  {


\vskip .2cm


\vskip .3cm

\section{Introduction}

In his seminal book {\em the art of computer programming} \cite{K68} Knuth considered a number of classical data structures from the perspective of the permutations which they could produce. Equivalently one can ask which permutations can be sorted by these machines as these will be exactly the inverses of the producible permutations. The first non-trivial machine that Knuth considered in this context is a single stack. In this case, starting with the identity permutation $12\cdots n$ in the input, two operations are permitted as shown in Figure \ref{fig:stack}:
\begin{itemize}
\item The input operation $I$, which moves the next element from the input onto the top of the stack.
\item The output operation $O$, which moves the top element of the stack to the output.
\end{itemize}
\begin{figure}[htbp]
   \centering
   \includegraphics[width=2.5in]{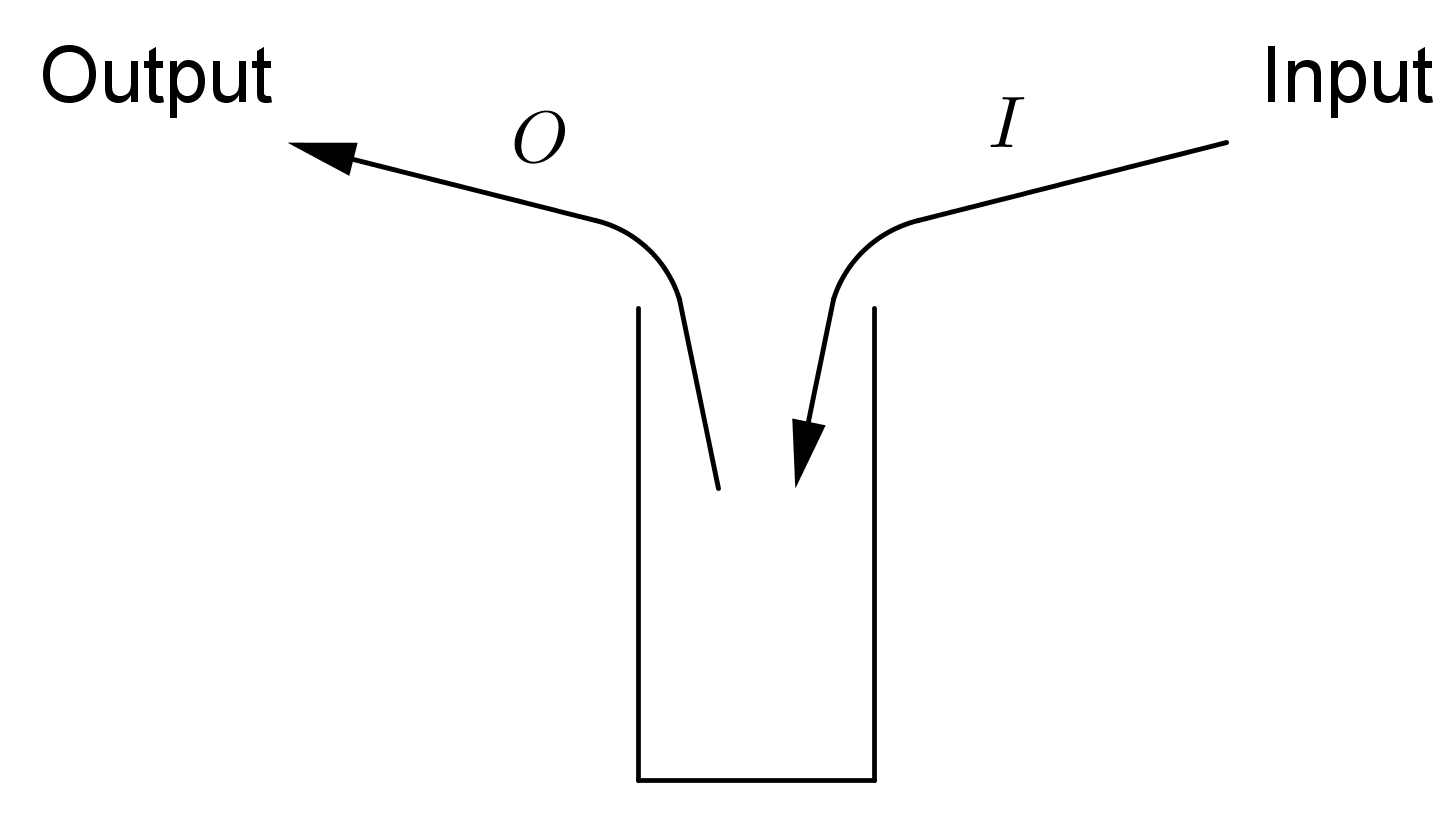} 
   \caption[The input and output operations on a stack.]{The input and output operations $I$ and $O$ on a stack.}
   \label{fig:stack}
\end{figure}
The permutation is defined by the order in which the elements are output. In particular, he showed that the number of permutations of length $n$ which can be produced by a single stack is the $n$th Catalan number
\[C_{n}=\frac{1}{n+1}{2n\choose n}.\] In this case, a valid sequence of the operations $I$ and $O$ corresponds to a unique Dyck path, with input operations corresponding to up steps and output operations corresponding to down steps. Moreover, Knuth showed that no two operation sequences produce the same permutation, so the number of permutations of size $n$ which can be produced by a single stack is equal to the number of Dyck paths of length $2n$, which is given by the $n$th Catalan number
\[C_{n}=\frac{1}{n+1}{2n\choose n}\sim \frac{1}{\sqrt{\pi}}4^{n}n^{-3/2}.\]
Further to this, Knuth characterised the set of achievable permutations, showing that they are exactly the permutations which are now called $312$-avoiding permutations. This is one of the first results in the field of pattern avoiding permutations.
Knuth then asked the same question for each of the following three more complicated data structures, shown in Figures \ref{fig:2sip}, \ref{fig:deque} and \ref{fig:2sis}:
\begin{itemize}
\item two stacks in parallel (2sip).
\item A double ended queue (deque).
\item two stacks in series (2sis).
\end{itemize}
For each of these machines, the set of sortable permutations has been shown to be a permutation class with infinite basis \cite{P73,M02}. In other words, the set of sortable permutations cannot be defined as those permutations which avoid some finite set of permutation patterns. For all three problems a polynomial time algorithm is known for determining whether a permutation is sortable \cite{EI71,PR13}.

\begin{figure}[ht]
\begin{minipage}{0.41\textwidth}
\includegraphics[width=0.97\linewidth]{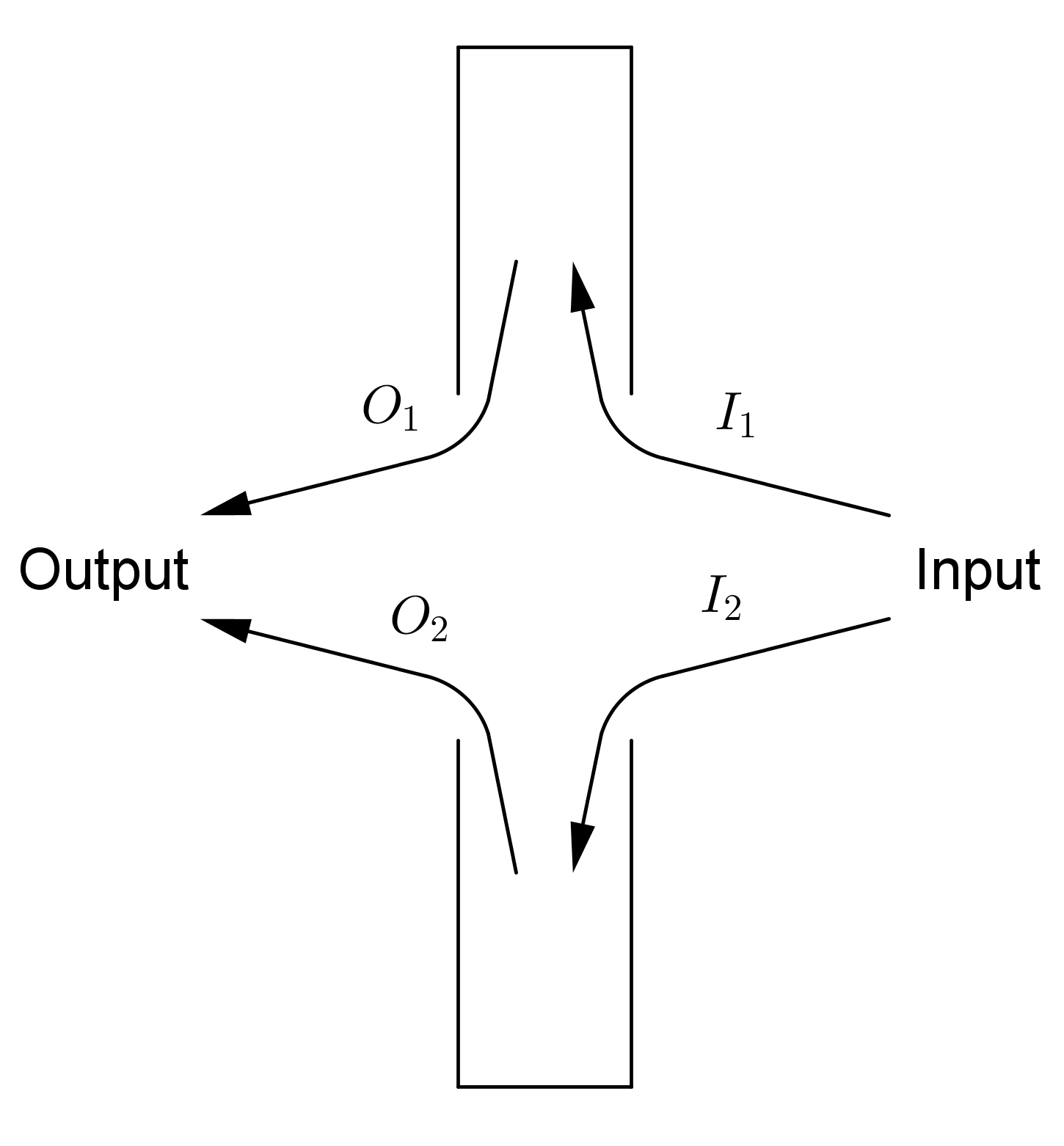} 
\caption{The input and output operations $I_{1}$, $I_{2}$, $O_{1}$ and $O_{2}$ on two stacks in parallel. }
\label{fig:2sip}
\end{minipage}\hfill
\begin{minipage}{0.55\textwidth}
\includegraphics[width=0.97\linewidth]{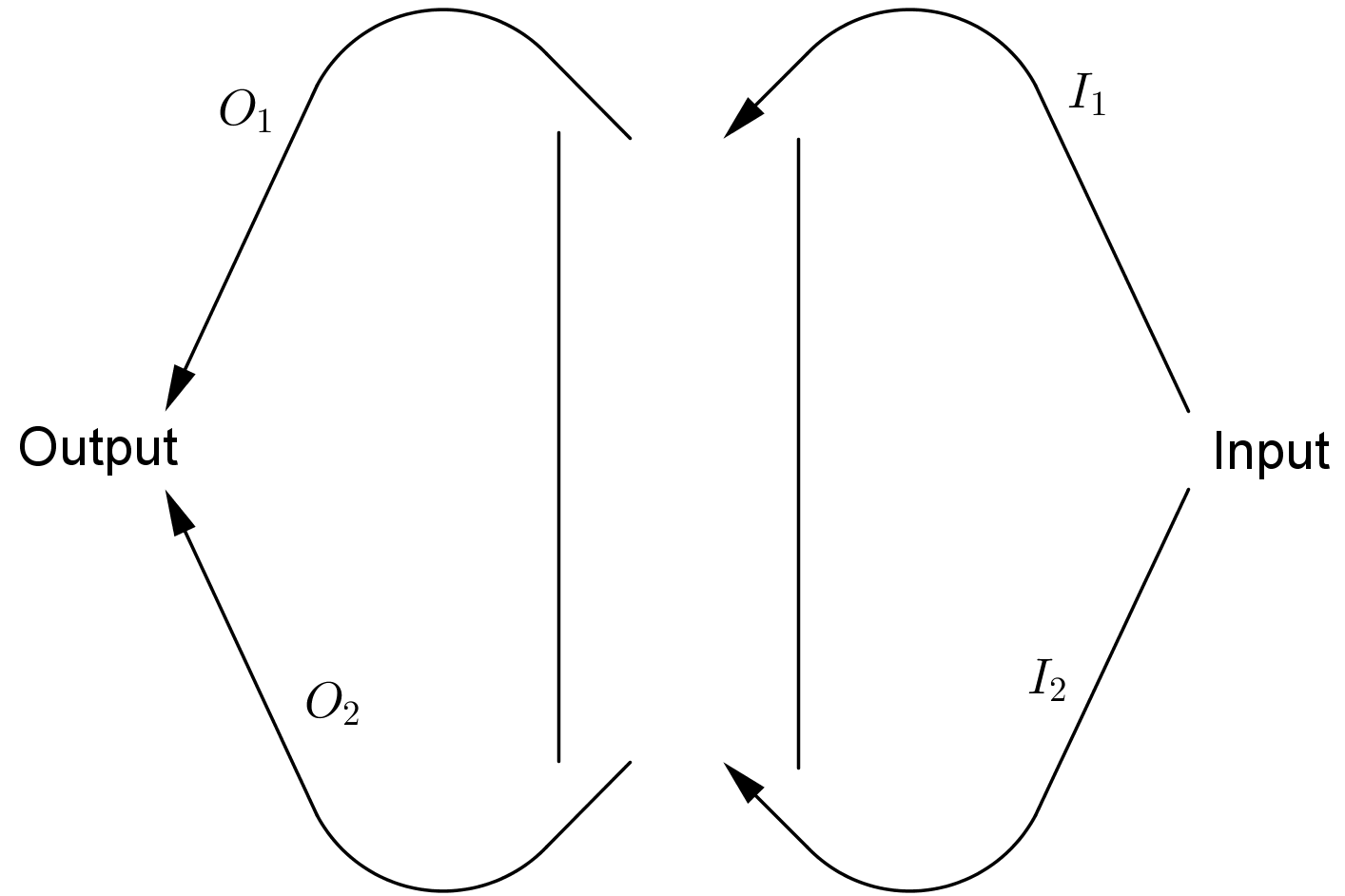} 
\caption{The input and output operations $I_{1}$, $I_{2}$, $O_{1}$ and $O_{2}$ on a deque.}
\label{fig:deque}
\end{minipage}
\end{figure}
\begin{figure}[htbp]
   \centering
   \includegraphics[width=3.3in]{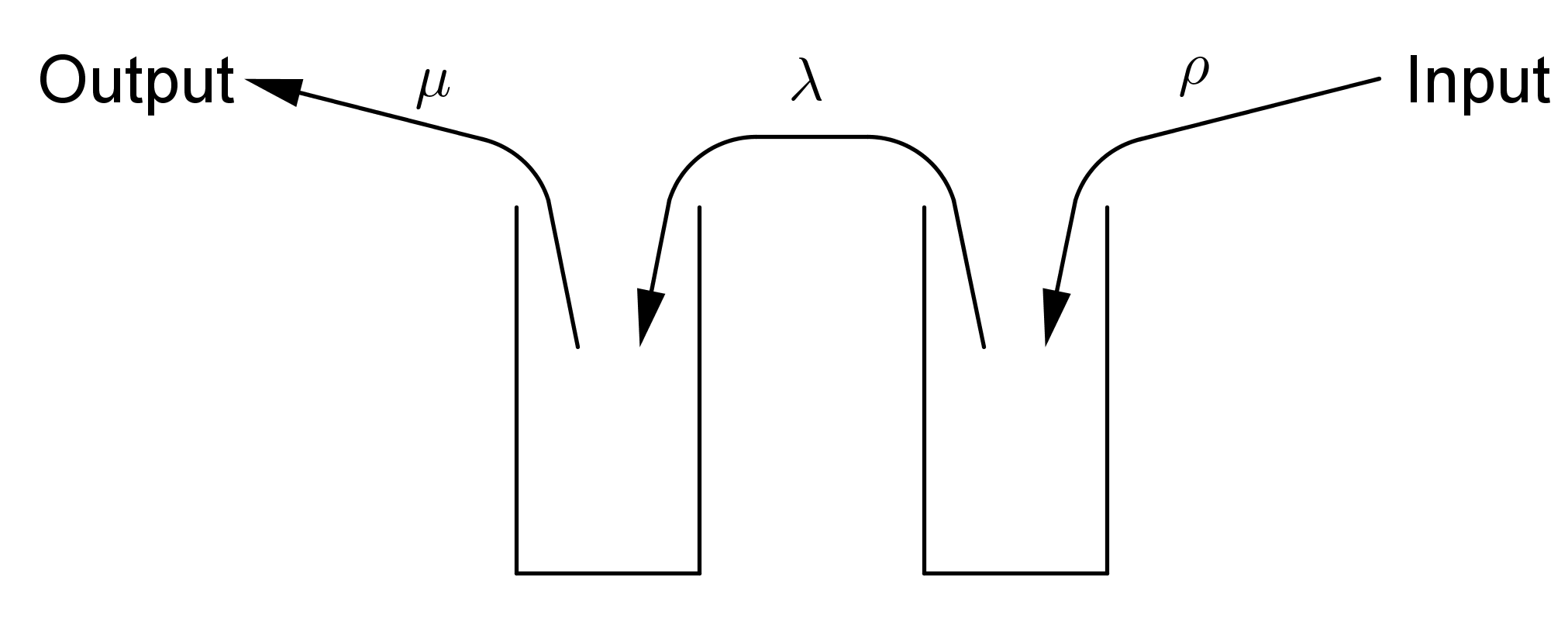} 
   \caption{The operations $\rho$, $\lambda$ and $\mu$ on two stacks in series.}
   \label{fig:2sis}
\end{figure}

We will focus on the problem of {\em enumerating} permutations sortable by one of these machines. Albert, Atkinson and Linton considered all three enumeration problems in \cite{AAL10}, where they derived upper and lower bounds on the growth rates $\mu_{p}$, $\mu_{s}$ and $\mu_{d}$ for 2sip-sortable, 2sis-sortable and deque-sortable permutations. They were also the first to suggest that the growth rates $\mu_{d}$ and $\mu_{p}$ may be equal based on the similarities in the bounds that they obtained (although they also suggested that these may both be equal to 8, which is now known to be incorrect \cite{EPPhD}). All of these problems remained open until 2015 when Albert and Bousquet-M\'elou enumerated permutations sortable by two stacks in parallel \cite{AB15}. Their solution took the form of a system of functional equations which characterise the generating function $P(t)$ of these permutations.  From an asymptotic perspective, these equations are surprisingly impenetrable, in that we are still unable to determine either the growth rate or the sub-exponential behaviour of the sequence exactly. In 2017, E.P. and Guttmann solved the problem for permutations sortable by a deque by demonstrating the following simple algebraic relationship between their generating function $D(t)$ and the generating function $P(t)$ of permutations sortable by two stacks in parallel:
\begin{equation}\label{DinP}2D(t)=2+t+2Pt-2Pt^2-t\sqrt{1-4P+4P^2-8P^2t+4P^2t^2-4Pt}.\end{equation}
They then reduced the problem of showing that $\mu_{d}=\mu_{p}$ to three conjectures of Albert and Bousquet-M\'elou regarding quarter-plane walks. Moreover, they used the equations from \cite{AB15} as well as \eqref{DinP} to compute over $1000$ coefficients of each generating function, which they analysed using the method of differential approximants. Using this analysis they gave estimates of $\mu_{p}$ and $\mu_{d}$ which agreed to 10 significant digits, further supporting their conjecture that these growth rates are equal. This is the conjecture that we prove in this article:
\begin{thmn}[\ref{mainthm}] The growth rate $\mu_{d}$ of deque-sortable permutations is equal to the growth rate $\mu_{p}$ of 2sip-sortable permutations.\end{thmn}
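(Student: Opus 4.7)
The easy direction $\mu_p \le \mu_d$ is immediate: any 2sip-sortable permutation is also deque-sortable, since the left and right ends of the deque can simulate two independent stacks (left push/pop simulating stack~$1$ and right push/pop simulating stack~$2$, with the two ends not interfering). Hence $p_n \le d_n$ and $\mu_p \le \mu_d$, and the substance of the theorem is the reverse inequality $\mu_d \le \mu_p$.

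For this reverse inequality I would work with the lattice-path encodings of the two sorting procedures. A sorting sequence for a 2sip, respectively a deque, is naturally represented as a walk in a common ambient lattice (the first quadrant), where the two coordinates track the sizes of the two stacks, respectively of the ``left'' and ``right'' halves of the deque. In the 2sip encoding the admissible local moves are the four push/pop operations on two independent stacks; in the deque encoding there are further configurations corresponding to \emph{pass-through} events, where an element enters at one end and exits at the other.

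The plan is then to construct a combinatorial map $\Phi$ from deque-paths encoding permutations of length~$n$ to 2sip-paths encoding permutations of length at most $(1+o(1))n$, with the property that every 2sip-path in the image has at most $e^{o(n)}$ preimages. Such a $\Phi$ gives $d_n \le e^{o(n)}\, p_{(1+o(1))n}$, and therefore $\mu_d \le \mu_p$ on taking $n$-th roots and letting $n\to\infty$. A natural construction of $\Phi$ is by local surgery: replace each pass-through in a deque path by a short 2sip-compatible detour, keeping just enough auxiliary data to undo the replacement. The length of the resulting 2sip path grows by a subleading factor provided the density of pass-throughs is bounded, and the auxiliary data is what controls the fibre size.

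The main obstacle will be to bound the fibres of $\Phi$ by $e^{o(n)}$ even when pass-throughs are abundant: a naive encoding of their positions and types is exponential in their number, so the fibre bound must exploit the sortability and pattern-class constraints (probably some monotonicity or interleaving property of the underlying permutation) to reduce the number of distinct pass-through patterns compatible with a given image. Once the quantitative inequality between $d_n$ and $p_n$ is in hand, the corollary giving the generating-function inequality for $P(t)$ conjectured by Albert and Bousquet-M\'elou should follow by translating the quantitative inequality through the algebraic identity~\eqref{DinP}.
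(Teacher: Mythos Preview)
Your easy direction $\mu_p\le\mu_d$ is correct and matches the paper. The problem is the hard direction: what you have written is a strategy outline, not a proof, and the step you yourself flag as the ``main obstacle'' --- bounding the fibres of $\Phi$ by $e^{o(n)}$ --- is precisely the whole content of the theorem. A deque-path of length $2n$ may have $\Theta(n)$ pass-through events, and encoding their positions already costs an exponential factor; you offer no mechanism (monotonicity, interleaving, etc.) that would cut this down, and I do not see one. Without that, the scheme yields only $d_n\le e^{cn}p_{(1+o(1))n}$ for some $c>0$, which gives nothing.

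The paper's argument sidesteps this difficulty by going in the opposite direction: rather than compressing a single deque-walk into a 2sip-walk, it \emph{packs many deque-type walks into one large 2sip-walk}. Concretely, it introduces ``big-walks'' (standard, eager walks in the diagonal half-plane ending on $x+y=0$), observes $p_n\le d_n\le b_n$, and then builds a 2sip-walk of length $2m^2+O(m)$ that contains $m$ independently chosen big-walks of length $2m$ as disjoint subwords. The key trick is a left/right symmetry: big-walks ending with $x\le 0$ are in bijection with those ending with $x\ge 0$, so at each stage one can choose the variant that keeps the running path inside the quarter plane. This yields $p_{m^2+O(m)}\ge \hat b_m^{\,m}$, hence $\mu_p\ge\mu_b\ge\mu_d$ after taking $m$-th roots of $m$-th roots. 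No fibre-counting is needed; the super-multiplicative embedding does all the work.
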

We were not able to prove the three conjectures of Albert and Bousquet-M\'elou about quarter-plane walks. Instead our proof of this theorem uses descriptions of $D(t)$ and $P(t)$ as generating functions for certain explicit classes of walks given in \cite{AB15} and \cite{EG17_deque}.

The outline of this paper is as follows: In Section \ref{growth rate equality} we will prove Theorem \ref{mainthm}. In Section \ref{2sipgf} we will use this result in conjunction with \eqref{DinP} to prove the inequality
\[\sqrt{2P(t)}\geq1+\sqrt{2tP(t)},\]
for $t\in[0,1/\mu_{p}]$, which was conjectured in \cite{AB15}. Finally, in the Section \ref{bounds} we will describe a new rigorous lower bound on the shared growth rate $\mu_{d}=\mu_{p}$. To the author's knowledge, this is now also the strongest known lower bound for the growth rate of permutations sortable by two stacks in {\em series}.

\section{The growth rates of deque-sortable and 2sip-sortable permutations}
\label{growth rate equality}
In this section we prove Theorem \ref{mainthm}. First we will describe a class of lattice walks counted by each of the two generating functions $P(t)$ and $D(t)$. These were essentially described in \cite{AB15} and \cite{EG17_deque}, respectively, albeit in the language of operation sequences rather than walks. The correspondence can be seen by replacing each operation $I_{1},I_{2},O_{1},O_{2}$ with the step $N,E,S,W$, respectively. In each case the walks start from the origin and have steps from the set $\{N,E,S,W\}$. In the case of $P(t)$, the walks are restricted to the quarter-plane $\{(x,y)|x,y\geq0\}$ while for $D(t)$, the walks are only restricted to the diagonal half-plane $\{(x,y)|x+y\geq0\}$. Before defining the two types of walks, we need to define three properties of these walks.

\begin{defn} A walk is called {\em eager} if it contains no $ES$ or $NW$ corners.\end{defn}

\begin{defn} A {\em QP-subloop} of a walk is a subsection of the walk which starts and ends at the same point $(p,q)$ and only visits points $(x,y)$ satisfying $x\geq p$ and $y\geq q$. In other words, a QP-subloop is a translate of a quarter-plane excursion. A walk $w$ is called {\em standard} if every QP-subloop of $w$ begins with an $N$ step.\end{defn}

\begin{defn} A walk is {\em vertical-happy} if each step from the lines $x+y=0$ and $x+y=1$ is either $N$ or $S$.\end{defn}
In the language of \cite{AB15} and \cite{EG17_deque} an eager walk corresponds to an operation sequence that outputs eagerly, a standard walk corresponds to a standard operation sequence, a QP-subloop corresponds to a tsip-subword of the corresponding operation sequence and a verticle-happy walk corresponds to a top-happy operation sequence. Note that the terms {\em top-happy} and {\em tsip-subword} were only used in \cite{EG17_deque} while the terms {\em standard} and {\em outputs eagerly} were introduced in \cite{AB15} and used in both articles.

Now we are ready to define the walks which are counted by $P(t)$ and $D(t)$, which we will call 2sip-walks and deque-walks respectively.

\begin{defn} A {\em 2sip-walk} is a walk confined to the quarter-plane, which starts and ends at the point $(0,0)$ and which is standard and eager.\end{defn}

\begin{defn} A {\em deque-walk} is a walk confined to the diagonal half-plane $\{(x,y)|x+y\geq0\}$, which starts at the point $(0,0)$, ends at some point $(x,-x)$ and which is standard, eager and vertical happy.\end{defn}

The following two propositions state the desired results that 2sip-walks correspond bijectively with 2sip-sortable permutations and deque-walks correspond bijectively with deque-sortable permutations. It follows immediately from these propositions that the series $P(t)$ counts 2sip-walks by half-length while the series $D(t)$ counts deque-walks by half-length. These are essentially \cite[Prop.~6]{AB15} and \cite[Prop.~2.5]{EG17_deque} respectively, so we omit the proofs here.
\begin{prop} For each integer $n\geq0$, each 2sip-walk of length $2n$ corresponds to a unique 2sip-sortable permutation of length $n$. In particular, the number of these walks is equal to $p_{n}$, the number of 2sip-sortable permutations of length $n$.\end{prop}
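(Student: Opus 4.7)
The plan is to build the bijection between 2sip-walks of length $2n$ and 2sip-sortable permutations of length $n$ by factoring it through the set of legal operation sequences on two stacks in parallel. First I would recall that under the substitution $N\mapsto I_{1}$, $E\mapsto I_{2}$, $S\mapsto O_{1}$, $W\mapsto O_{2}$, a walk from $(0,0)$ to $(0,0)$ confined to the quarter-plane is the same thing as a legal operation sequence that processes $n$ input elements and returns both stacks to empty: the coordinate $(x,y)$ records the current heights of the two stacks, so the quarter-plane condition is exactly stack-legality and the closed-walk condition is exactly the requirement that both stacks are emptied. Evaluating such a sequence on the input $12\cdots n$ produces a permutation, and the set of permutations obtained this way is by definition the set of 2sip-producible permutations, whose inverses are the 2sip-sortable permutations of length $n$.

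The real content of the proposition is that requiring the walk to be both \emph{standard} and \emph{eager} selects exactly one operation sequence from each fiber of the evaluation map. I would prove existence and uniqueness separately. For existence, starting from any operation sequence producing a given 2sip-sortable $\sigma$, one can move every output step as early as possible: an $O_{i}$ that follows an input step $I_{j}$ acting on a different stack (or on the same stack below the top) can be swapped earlier without altering the emitted sequence, and iterating this yields an eager sequence producing $\sigma$. Then, working from the outermost QP-subloop inward, one swaps the roles of the two stacks on any QP-subloop whose first step is $E$: inside a QP-subloop the two stacks are active only above their base heights $p,q$, so they operate independently of the rest of the sequence and of each other, exchanging their roles gives another legal sequence producing the same permutation, and this exchange preserves eagerness because it merely relabels stacks.

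For uniqueness, suppose two distinct standard eager sequences produce the same permutation, and consider the first position where they differ. If one performs an output and the other an input, the output was available to both, contradicting eagerness of the latter. If the two sequences push onto different stacks (or pop from different stacks), I would identify the minimal suffix beginning at this position during which both stacks simultaneously return to their current heights; translating by $(-p,-q)$ this suffix is a QP-subloop in both sequences, and one of the two begins it with $E$, contradicting standardness.

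The main obstacle is the uniqueness half and, more specifically, the identification of a QP-subloop responsible for each ``which stack?'' ambiguity. The verification is essentially combinatorial bookkeeping: one checks that the two-stack independence inside a QP-subloop justifies the swap, that the swap preserves the quarter-plane and eager conditions, and that every genuine choice of stack corresponds to the start of some QP-subloop so that standardness pins the choice down. Since this is precisely the argument carried out in \cite{AB15}, the proof here would closely follow theirs and is omitted in the paper for that reason.
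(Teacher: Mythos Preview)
Your proposal is correct and aligns with the paper's treatment: the paper omits the proof entirely, pointing to \cite[Prop.~6]{AB15}, and your sketch is a faithful outline of precisely that argument (canonicalising an arbitrary operation sequence by first making it output eagerly, then swapping stacks on QP-subloops to enforce standardness, with uniqueness coming from the observation that any first discrepancy between two standard eager sequences for the same permutation would witness either a non-eager step or a QP-subloop beginning with $E$). The only spot where your sketch is slightly loose is the existence of the QP-subloop at the first point of divergence in the uniqueness half---this uses that both sequences produce the same permutation (so the stack contents below heights $p,q$ must eventually be popped in the same order, forcing a simultaneous return to $(p,q)$)---but this is exactly the bookkeeping carried out in \cite{AB15}, as you note.
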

\begin{prop} For each integer $n\geq0$, each deque-walk of length $2n$ corresponds to a unique deque-sortable permutation of length $n$. In particular, the number of these walks is equal to $d_{n}$, the number of deque-sortable permutations of length $n$.\end{prop}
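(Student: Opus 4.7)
The plan is to establish the bijection by defining a canonical sorting sequence for each deque-sortable permutation. Under the translation $I_1, I_2, O_1, O_2 \leftrightarrow N, E, S, W$, the quantity $x+y$ along the walk equals the current size of the deque; hence the diagonal half-plane constraint $x+y \geq 0$ is equivalent to requiring that no pop acts on an empty deque, and the condition that the walk ends at a point with $y = -x$ means that after $2n$ steps the deque is empty. A deque-walk of length $2n$ therefore determines a valid operation sequence consuming inputs $1,\ldots,n$ whose output is a well-defined element $\phi(w) \in S_n$ that is by construction deque-sortable.

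For injectivity I would argue that the conditions \emph{eager}, \emph{standard} and \emph{vertical-happy} together force a unique next step at every stage. Given the current deque contents and the next required output $\sigma_k$, either some end of the deque holds $\sigma_k$, in which case eagerness requires a pop: if only one end matches, the step is determined, while if both ends match (so the deque has size one) vertical-happiness fixes $S = O_1$. Otherwise an input step is required: when $x+y \leq 1$ vertical-happiness forces $N = I_1$, and when $x+y \geq 2$ the standard condition fixes the first step of the QP-subloop about to begin, that subloop being the self-contained segment of operations which eventually returns the deque to its present contents.

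For surjectivity I would start from any operation sequence sorting $\sigma$ and apply three local rewrites, each preserving the output permutation. First, every $I_a O_b$ with $a \neq b$ may be swapped to $O_b I_a$ since the pushed and popped elements are distinct and reside at different ends; iterating removes all $ES$ and $NW$ corners. Second, within any QP-subloop the two ends of the deque behave as two independent stacks, so reflecting the subloop across the diagonal $y=x$ swaps their roles; if the subloop begins with $E$, reflecting makes it begin with $N$, enforcing standardness. Third, along segments where $x+y \in \{0,1\}$ the top and bottom of the deque coincide or do not exist, so a further reflection enforces vertical-happiness.

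The main obstacle will be verifying that these three rewrites are compatible and can be applied in sequence without reintroducing earlier violations, especially near the lines $x+y=0$ and $x+y=1$ where a QP-subloop may partially overlap the vertical-happy zone. For termination I would use a monovariant such as the sum of positions of the forbidden corners, showing that each rewrite strictly decreases it. Once compatibility and termination are established, the canonical operation sequence exists and is unique, so $\phi$ is a bijection; this mirrors the arguments of \cite[Prop.~6]{AB15} in the quarter-plane case and \cite[Prop.~2.5]{EG17_deque} in the diagonal half-plane case.
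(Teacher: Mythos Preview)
The paper gives no proof of its own; it simply cites \cite[Prop.~6]{AB15} and \cite[Prop.~2.5]{EG17_deque}. Your surjectivity sketch via local rewrites (first enforce eagerness, then standardness, then vertical-happiness) is exactly the canonicalisation strategy of those references, and your remark that compatibility and termination of the rewrites is the technical crux is accurate.

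Your injectivity argument, however, has a real gap. You assert that whenever an input is required at a point with $x+y\ge 2$, ``the standard condition fixes the first step of the QP-subloop about to begin, that subloop being the self-contained segment of operations which eventually returns the deque to its present contents.'' This is precisely where a deque differs from two stacks in parallel. In a 2sip walk every push begins a QP-subloop, because an element pushed to one stack can only leave from that stack; in a deque an element pushed at one end may exit from the other, so the walk need not return to the current point $(p,q)$ at all, let alone while remaining in $\{x\ge p,\ y\ge q\}$. Concretely, the deque-walk $NNNNNSWWWS$ (which sorts $51234$) is standard, eager and vertical-happy, yet the input step at $(0,2)$ does not begin any QP-subloop: the walk never revisits $(0,2)$, and it leaves the translated quarter plane at $(-1,4)$. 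So standardness places no direct constraint on that step, and your forcing argument breaks down. (A related issue: the condition ``no $ES$ or $NW$ corner'' is a priori weaker than ``pop whenever an end holds $\sigma_k$''; the implication you use requires an argument.)

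In \cite{EG17_deque} uniqueness is not obtained by a step-by-step forcing argument; it comes out of the rewriting analysis itself, by showing that any two operation sequences producing the same permutation are joined by the rewrite moves and that the canonical form is the unique normal form. You should route injectivity through that argument rather than through the QP-subloop claim.
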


It will be convenient to define a third type of walk, which simultaneously generalises 2sip-walks and deque-walks:
\begin{defn} A {\em big-walk} is a walk confined to the diagonal half-plane $\{(x,y)|x+y\geq0\}$, which starts at the point $(0,0)$, ends at some point $(x,-x)$ and which is standard and eager. Let $b_{n}$ be the number of these walks of length $2n$ and let
\[B(t)=\sum_{n=0}^{\infty}b_{n}t^n\]
be the series which counts big-walks by half-length.\end{defn}
Note that we can define deque-walks and 2sip-walks very quickly in terms of big-walks: a deque-walk is a vertical-happy big-walk, while a 2sip-walk is a big-walk which is confined to the quarter-plane. We note that concatenating two big-walks forms another big-walk, so we have the super multiplicative property $b_{m+n}\geq b_{m}b_{n}$. Together with Fekete's lemma \cite{Fekete}, and the easy upper bound $b_{m}\leq 16^m$, we see that the growth rate
\[\mu_{b}=\lim_{n\to\infty}\sqrt[n]{b_{n}}\]
exists and is finite. Completely analogous arguments show that the growth rates $\mu_{p}$ and $\mu_{d}$ exist. Before we prove the main theorem, we will define two sub-classes of big paths, which we call left-walks and right-walks.
\begin{defn}
A {\em left-walk} is a big-walk that ends at some point $(x,-x)$ satisfying $x\leq0$. Conversely, a {\em right-walk} is a big-walk that ends at some point $(x,-x)$ satisfying $x\geq0$.
\end{defn}
The following lemma states that there is a bijection between left-walks and right-walks of the same length. This justifies our introduction of big-walks, as the analogous symmetry does not hold for deque-walks.
\begin{lemma} For each $n$, the number of left-walks of length $2n$ is equal to the number of right-walks of length $2n$.\end{lemma}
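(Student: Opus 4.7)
The plan is to construct an explicit length-preserving bijection between left-walks and right-walks. The natural primary ingredient is the global reflection $\phi$ across the diagonal $y=x$: it acts on steps by the involution $\sigma:N\leftrightarrow E,\ S\leftrightarrow W$ and on points by $(x,y)\mapsto(y,x)$. This $\phi$ fixes the origin, swaps endpoints $(x,-x)\leftrightarrow(-x,x)$ (hence exchanges the possible endpoints of left-walks and right-walks), preserves the half-plane $\{x+y\ge 0\}$, and preserves eagerness, since $\sigma$ swaps $ES$ corners with $NW$ corners. The sole obstruction is the ``standard'' condition: under $\phi$, a QP-subloop that began with $N$ becomes one that begins with $E$. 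Thus $\phi$ is a bijection from big-walks ending at $(x,-x)$ onto ``co-standard'' walks (walks satisfying every big-walk axiom except that every QP-subloop begins with $E$ rather than $N$) ending at $(-x,x)$.

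To repair this I would construct a second bijection $\rho$, this time from big-walks to co-standard walks, that \emph{preserves} the endpoint. The candidate $\rho$ is: identify the \emph{maximal} QP-subloops of the walk and reflect the step sequence of each one using $\sigma$ (equivalently, reflect the subloop through its own base point, which preserves the NE quadrant rooted there). The composite $\Phi=\phi\circ\rho$ is then the required bijection sending a left-walk to a right-walk of the same length.

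The key structural observation needed for $\rho$ is that any two distinct maximal QP-subloops must have disjoint position ranges: if they overlapped in time without one containing the other, the ``staying in the NE quadrant'' property would force their base points to coincide and would produce a strictly larger QP-subloop containing both, contradicting maximality. More generally, every QP-subloop is contained (position-wise) in a unique maximal QP-subloop. Given this, $\rho$ is an unambiguous local operation. Half-plane containment is preserved because each base $(p,q)$ satisfies $p+q\ge 0$ and a reflected subloop stays in $\{x\ge p,\,y\ge q\}\subseteq\{x+y\ge 0\}$. Eagerness is preserved both inside each reflected subloop (since $\sigma$ swaps the two forbidden corners $ES$ and $NW$) and across subloop boundaries, because the first step of any subloop is $N$ or $E$ and the last step is $S$ or $W$, so the adjacent corners are never of the forbidden forms regardless of whether the boundary step is toggled by $\sigma$.

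The main obstacle I expect is verifying that $\rho$ actually lands in co-standard walks and is an involution: one must track the correspondence between QP-subloops of the original walk and of the reflected walk. Using the local reflection through each base point, one can check that a position range $[i,j]$ is a QP-subloop of the new walk if and only if it is a QP-subloop of the old one, with its starting step toggled between $N$ and $E$; in particular, no QP-subloop crosses a maximal subloop boundary without being contained in it, since any such crossing would again produce a strictly larger QP-subloop contradicting maximality. Once these facts are established, $\Phi=\phi\circ\rho$ provides the desired length-preserving bijection between left-walks and right-walks, proving the lemma.
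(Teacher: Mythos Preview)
Your construction is essentially the same bijection as the paper's: the paper first applies the global reflection $\phi$ and then toggles the maximal QP-subloops, whereas you toggle first and reflect second, but since $\phi$ preserves the position ranges of maximal QP-subloops these two compositions coincide step for step (in both cases the net effect is to apply $\sigma$ exactly on the steps lying \emph{outside} every maximal QP-subloop). Your treatment is in fact more careful than the paper's on eagerness at subloop boundaries and on the disjointness of maximal QP-subloops; the only point you flag as the ``main obstacle'' (that $\rho$ is an involution interchanging standard and co-standard walks) is precisely what the paper asserts in one sentence without further detail.
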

\begin{proof}We will describe a procedure which bijectively converts left-walks $w_{l}$ into a right-walks $w_{r}$. Starting with a left-walk $w_{l}$, let $w_{l}'$ be the reflection of $w_{l}$ in the line $x=y$. Now each 2sip-loop in $w_{l}'$ begins with $E$. For each such 2sip-loop which is not contained in any other 2sip-loop, replace each $N$, $E$, $S$ and $W$ step with $E$, $N$, $W$ and $S$ respectively. Call the resulting path $w_{r}$. Then each 2sip-loop in $w_{r}$ will start with $N$, so $w_{r}$ is a big path. Moreover, the endpoint of $w_{r}$ will be the same as that of $w_{l}'$, so $w_{r}$ is a right-walk. We can then apply the exact same transformation to $w_{r}$ to reproduce $w_{l}$, so this transformation is bijective. Hence the number of left-walks is equal to the number of right-walks.\end{proof}
Let $\hat{b}_{n}$ be the number of left-walks of length $2n$, which, as we have just seen, is also the number of right-walks of length $2n$. Since each big-walk is either a right-walk or left-walk (or both), we have the inequality $2\hat{b}_{n}\geq b_{n}$. In particular this means that the growth rate
\[\mu_{b}=\lim_{n\to\infty}\sqrt[n]{\hat{b}_{n}}.\]

We are now ready to prove our main theorem:
\begin{thm}\label{mainthm}The exponential growth rates
\[\mu_{p}=\lim_{n\to\infty}\sqrt[n]{p_{n}},~~~~~~~~\mu_{d}=\lim_{n\to\infty}\sqrt[n]{d_{n}}~~~~~~~\text{and}~~~~~~~\mu_{b}=\lim_{n\to\infty}\sqrt[n]{b_{n}}\]
are all equal.\end{thm}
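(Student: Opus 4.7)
Since every 2sip-walk and every deque-walk is by definition a big-walk, we have $p_n\le b_n$ and $d_n\le b_n$ for all $n$, hence $\mu_p\le\mu_b$ and $\mu_d\le\mu_b$. The real content of the theorem is the two reverse inequalities. The preceding Lemma reduces both to a comparison with right-walks, since $\mu_b=\lim_{n\to\infty}\hat{b}_n^{1/n}$.

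For $\mu_b\le\mu_d$, observe that a deque-walk is precisely a vertical-happy big-walk, and that vertical-happiness is a local restriction on steps emerging from the two lines $x+y=0$ and $x+y=1$. I would construct a local surgery converting any big-walk into a deque-walk by replacing each forbidden $E$ or $W$ step originating on one of these two lines by a short pattern of $N,S$-steps (recalling that on $x+y=0$ only $N$ and $E$ are available anyway, so only $E$'s need replacing). Each such replacement adds a bounded number of steps and has bounded multiplicity, and the positions of replacements can be encoded with at most a polynomial overhead in $n$. Combined with the super-multiplicativity argument used for $b_n$, this gives $\mu_b\le\mu_d$.

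For $\mu_b\le\mu_p$, given a right-walk $w$ of length $2n$ ending at $(x,-x)$ with $x\ge 0$, let $a=\max(0,-x_{\min})$ and $h=\max(0,-y_{\min})$ denote its maximum leftward and downward excursions. The shifted walk $w+(a,h)$ lies entirely in the closed quarter-plane. I would prepend a prefix consisting of $a$ $E$-steps followed by $h$ $N$-steps in order to reach $(a,h)$ from the origin, and append a suffix of $W$'s and $S$'s returning to the origin, yielding a quarter-plane walk of length $2n+2(a+h+x)$ that starts and ends at the origin. Because a typical right-walk has $a+h+x=O(\sqrt{n})$, the dominant contribution to $\hat{b}_n$ comes from right-walks with $a+h+x=o(n)$; summing over the $O(n^3)$ values of $(a,h,x)$ contributes only a polynomial factor. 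This yields an inequality of the form $\hat{b}_n\le n^{O(1)}\cdot p_{n+o(n)}$ and hence $\mu_b\le\mu_p$.

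The main obstacle in both surgeries is preserving the standard and eager properties. For example, prepending $E$-steps to a right-walk whose first step is $S$ would create a forbidden $ES$ corner, and inserting $N$- or $S$-steps can create new QP-subloops whose initial step is not $N$. I would address this by inserting short buffer sub-walks (of bounded length) at each junction, and then verifying case by case that standardness is restored. Combining the two reverse inequalities with the easy inclusions yields $\mu_p=\mu_d=\mu_b$.
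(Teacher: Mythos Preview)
Your proposal has genuine gaps in both reverse inequalities.

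For $\mu_b\le\mu_d$: the number of forbidden $E$- or $W$-steps leaving the lines $x+y\in\{0,1\}$ in a big-walk is not bounded; it can be of order $n$. If each local repair adds even one step, the repaired walk can have length $(1+c)\cdot 2n$ for some $c>0$, and you only obtain $\mu_b\le\mu_d^{1+c}$, which is useless. There is also no reason the repairs can be made injective with only polynomial bookkeeping once linearly many of them interact, and you give no concrete replacement pattern that preserves both eagerness and standardness.

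For $\mu_b\le\mu_p$: the assertion that ``a typical right-walk has $a+h+x=O(\sqrt n)$'' is a heuristic about simple random walk, not a fact about the constrained class of standard eager half-plane walks. You would need to prove that the contribution of right-walks with, say, $a+h+x>\epsilon n$ is exponentially negligible compared with $\hat b_n$, and nothing in the setup gives you that. Without it the inequality $\hat b_n\le n^{O(1)}p_{n+o(n)}$ does not follow. The buffer insertions needed to restore eagerness and standardness are also left entirely unspecified.

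The paper avoids all of these problems with a different idea. First, it uses $p_n\le d_n$ (any 2sip-sortable permutation is deque-sortable) together with $d_n\le b_n$, so only $\mu_b\le\mu_p$ must be proved. Then, instead of embedding one long big-walk, it concatenates $m$ big-walks each of length $2m$, choosing at each stage a left-walk or a right-walk (using the Lemma that these are equinumerous) so as to pull the current endpoint back toward the diagonal point $(2m,2m)$. A fixed prefix of length $4m+4$ reaches $(2m,2m)$ and a fixed suffix of length $4m$ returns to the origin; the whole path has length $2m^2+8m+4$, stays in the quarter-plane by construction, and is shown directly to be eager and standard. This yields $p_{m^2+4m+1}\ge\hat b_m^{\,m}$, and taking $m$th roots gives $\mu_p\ge\mu_b$. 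The key point you are missing is that by working with many \emph{short} big-walks and using the left/right symmetry to control drift, the excursion issue disappears: the body never strays more than $m$ from $(2m,2m)$, so no probabilistic estimate is needed.
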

\begin{proof}
We already know that for each $n$, the inequality $p_{n}\leq d_{n}$ holds, since any 2sip-sortable permutation can also be sorted by a deque. Moreover, since every deque-walk is also a big-walk, we have the inequality $d_{n}\leq b_{n}$. Hence, it suffices to show that the growth rate $\mu_{b}$ of big-walks is no greater than the growth rate $\mu_{p}$ of 2sip-walks. In order to show that $\mu_{b}\leq\mu_{p}$, we will construct a 2sip-walk of length $2m^2+8m+4$ as follows:
\begin{itemize}
\item [(1)] Start with $2m+1$ north steps followed by $2m$ east steps then a north step followed by two south steps.
\item [(2)] Add $m$ big-walks as follows: if we are currently at a point $(2m+x,2m-x)$ with $x>0$, add a left-walk of length $2m$, and otherwise add a right-walk of length $2m$.
\item [(3)] Finally, add south steps until reaching the $x$-axis then west steps until reaching $(0,0)$.
\end{itemize}
An example of this path for $m=3$ is shown in Figure \ref{fig:long_path}.
For each of the $m$ big-walks added in stage (2), one has $\hat{b}_{m}$ choices, so the number of possible paths is $\hat{b}_{m}^{m}$. Note also that from the entire path we can reconstruct the $m$ big-walks simply removing the first $4m+2$ and last $4m$ steps, then separating the remaining $2m^2$ steps into $m$ pieces of $2m$ consecutive steps. These $m$ pieces will then be the $m$ big paths. Hence the $\hat{b}_{m}^{m}$ possible paths are all distinct.
\begin{figure}[htbp]
   \centering
   \includegraphics[width=3in]{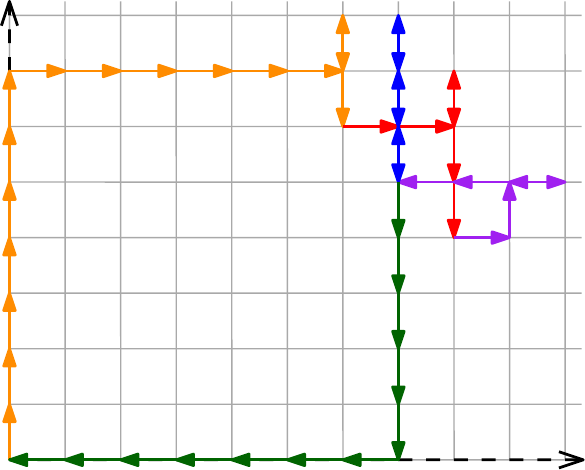} 
   \caption{An example of a 2sip-walk constructed in the proof of Theorem \ref{mainthm} for $m=3$. This path has length $2m^2+8m+4=46$. The section added in stage (1) is shown in orange, the three big-walks added in stage (2) are shown in red, purple and blue, respectively and the section added in stage (3) is green.}
   \label{fig:long_path}
\end{figure}

We will now show that each path is a 2sip-walk as claimed. First note that after stage (1), the path is at $(2m,2m)$. It is easy to show by induction that before and after each big-walk is added in stage (2), the path will be at some point $(2m+x,2m-x)$, with $x\in[-m,m]$. Finally, stage (3) takes $4m$ steps to return to $(0,0)$. At no point in this process is it possible that the path leaves the quarter-plane, hence the path is a quarter-plane walk which ends at $(0,0)$. We also note that the path cannot contain any ES or NW corners, so it is eager. Finally we will show that the path is standard. Suppose for the sake of contradiction that the path is not standard, so it contains a QP-subloop $\ell$ starting with E. Without loss of generality, $\ell$ does not return to its starting point until it ends, as otherwise it would contain a shorter QP-subloop, starting with the same step E. The QP-subloop $\ell$ cannot start before the first big-path, as the only E steps in stage (1) are at height $y=2m+1$, but the path goes to the point $(2m,2m)$ before it could return to the starting point. Since there are no E steps in stage (3), the QP-subloop $\ell$ must start in one of the big-walks, hence it must start and end at some point $(x_{\ell},y_{\ell})$ satisfying $x_{\ell}+y_{\ell}\geq 4m$. Since each big-walk is standard, $\ell$ cannot be contained entirely within one of these big-walks, so an end point $(2m+x,2m-x)$ of one of the big-paths must be an interior point of $\ell$. Since $\ell$ is a QP-subloop, we have $x_{\ell}\leq(2m+x)$ and $y_{\ell}\leq(2m-x)$. Moreover, these cannot both be equalities as we assumed that $\ell$ does not contain its starting point except at its ends. Hence $x_{\ell}+y_{\ell}<(2m+x)+(2m-x)=4m$. This is a contradiction, so in fact the entire path is standard. Hence the path is a 2sip-walk.

Therefore these $\hat{b}_{m}^{m}$ paths defined by the procedure are all distinct 2sip-walks. Hence, the number $p_{m^2+4m+1}$ of 2sip-walks of length $2m^2+8m+2$ satisfies the inequality
\[p_{m^2+4m+1}\geq\hat{b}_{m}^{m}.\]
Raising both sides of this inequality to the power $1/m^2$, then taking the limit $m\to\infty$ yields the desired inequality $\mu_{p}\geq\mu_{b}$. Since we also know that $\mu_{b}\geq\mu_{d}\geq\mu_{p}$, it follows that $\mu_{p}=\mu_{d}=\mu_{b}$, as required.
\end{proof}

\section{The generating function for 2sip-sortable permutations}
\label{2sipgf}
In \cite[Thm. 23]{AB15}, Albert and Bousquet-M\'elou proved, subject to two conjectures about quarter-plane walks, that
\[\frac{t}{(2-1/P(t))^2}\leq \rho_{Q}(1/P(t)-1),\]
for $0\leq t\leq t_{c}=1/\mu_{p}$, with equality if and only if $t=t_{c}$. In the equation above, $\rho_{Q}(a)$ denotes the radius of convergence of a certain class of weighted quarter-plane walks, with weight $a$. Combining this with their conjectured formula \cite[Conj. 11]{AB15} for $\rho_{Q}(a)$ then rearranging yields the conjecture that
\[\sqrt{2P(t)}\geq1+\sqrt{2tP(t)},\]
for $0\leq t\leq t_{c}=1/\mu$, with equality if and only if $t=t_{c}$. In this section we prove this inequality using our result that $\mu_{p}=\mu_{d}$, although we are not able to prove that equality holds at $t=t_{c}$.

Recall that the generating functions $P(t)$ and $D(t)$ are related by \eqref{DinP}, which we repeat here:
\[2D(t)=2+t+2Pt-2Pt^2-t\sqrt{1-4P+4P^2-8P^2t+4P^2t^2-4Pt}.\]
Using this equation in conjunction with the result that $\mu_{d}=\mu_{p}$, we can prove the main theorem of this section:
\begin{thm}The generating function $P(t)$ for 2sip-sortable permutations counted by length satisfies the inequality
\[\sqrt{2P(t)}\geq1+\sqrt{2tP(t)},\]
for $t\in[0,t_{c}]$, where $t_{c}$ is the radius of convergence of $P(t)$.\end{thm}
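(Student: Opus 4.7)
The plan is to exploit the algebraic relation \eqref{DinP} together with Theorem \ref{mainthm}, which ensures that $D(t)$ and $P(t)$ share the common radius of convergence $t_c$. Since $D(t)$ is therefore a finite real number at every $t \in [0,t_c)$, the only term in \eqref{DinP} that could produce a non-real value, namely the square root, must have non-negative radicand on $[0,t_c)$:
\[\Delta(t) \;:=\; 1 - 4P + 4P^2 - 8P^2 t + 4P^2 t^2 - 4Pt \;\geq\; 0.\]
A direct rearrangement yields the difference-of-squares identity
\[\Delta(t) \;=\; 1 - 4P(1+t) + 4P^2(1-t)^2 \;=\; \bigl(1 - 2P(1-t)\bigr)^2 - 8tP,\]
so $\bigl(1 - 2P(1-t)\bigr)^2 \geq 8tP$ throughout $[0,t_c)$.

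To extract the desired inequality we need the sign of $1 - 2P(1-t)$. Observe that the sequence $p_n$ is non-decreasing: appending the largest element $n$ at the end of a 2sip-sortable permutation of length $n-1$ yields a 2sip-sortable permutation of length $n$. Consequently $(1-t)P(t) = 1 + \sum_{n\geq 1}(p_n - p_{n-1})\,t^n$ has non-negative coefficients and constant term $1$, so $(1-t)P(t) \geq 1$ and hence $2P(1-t) - 1 \geq 1 > 0$ on $[0, t_c)$. Taking the positive square root of both sides of the previous inequality gives
\[2P(1-t) - 1 \;\geq\; 2\sqrt{2tP},\]
and rearranging yields $2P \geq (1 + \sqrt{2tP})^2$, from which $\sqrt{2P(t)} \geq 1 + \sqrt{2tP(t)}$ for every $t \in [0, t_c)$.

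It remains to handle the endpoint $t = t_c$. If $P(t_c) < \infty$ the inequality extends by continuity of both sides as $t \to t_c^-$, while if $P(t_c) = +\infty$ it is trivial since $t_c < 1$ (as $\mu_p > 1$) forces $\sqrt{2P(t)}\,(1 - \sqrt{t}) \to +\infty$. The main obstacle is recognising the difference-of-squares factorisation of $\Delta(t)$ and combining it with Theorem \ref{mainthm} to justify $\Delta(t) \geq 0$; once these are in place, choosing the correct branch of the square root, which is secured by the monotonicity of $p_n$, makes the inequality essentially immediate.
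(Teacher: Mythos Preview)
Your proof is correct and follows essentially the same route as the paper: use Theorem~\ref{mainthm} to conclude that $D(t)$ is real on $[0,t_c)$, deduce from \eqref{DinP} that the radicand $\Delta(t)$ is non-negative, and then manipulate algebraically to isolate the desired factor. The only cosmetic difference is that the paper factors $\Delta(t)$ as a product of four linear expressions in $\sqrt{2P}$ and $\sqrt{2tP}$ and argues directly that three of them are positive (using only $P\geq1$ and $t<1$), whereas you complete the square to write $\Delta(t)=(2P(1-t)-1)^2-8tP$ and invoke monotonicity of $(p_n)$ to pin down the sign of $2P(1-t)-1$; both lead to the same conclusion, and your endpoint treatment is in fact slightly more careful than the paper's.
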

\begin{proof}
By Theorem \ref{mainthm}, we know that $t_{c}$ is also the radius of convergence of $D(t)$, so for $t\in[0,t_{c})$, we have $D(t)\in\mathbb{R}$. In particular, using \eqref{DinP}, this implies that
\[1-4P(t)+4P(t)^2-8P(t)^2t+4P(t)^2t^2-4P(t)t\geq0.\]
Factorising the left hand side and writing $P=P(t)$, we deduce that the product
\[\left(\sqrt{2P}-1-\sqrt{2tP}\right)\left(\sqrt{2P}+1-\sqrt{2tP}\right)\left(\sqrt{2P}-1+\sqrt{2tP}\right)\left(\sqrt{2P}+1+\sqrt{2tP}\right)\]
is non-negative. Since $t\in[0,1)$ and $P=P(t)\geq P(0)=1$, the second, third and fourth factors in the above expression are positive, so \[\sqrt{2P(t)}\geq1+\sqrt{2tP(t)},\]
as required. So far we have proved this for $t\in[0,t_{c})$, but taking the limit as $t\to t_{c}$ implies that the inequality also holds at $t=t_{c}$.\end{proof}

\section{Bounds on the growth rate}\label{bounds}
In this section we derive a new lower bound of $8.21927$ on the growth rate $\mu$ of permutations sortable by two stacks in parallel and permutations sortable by a deque. This improves on the bounds shown by Albert, Atkinson and Linton \cite{AAL10}  of $7.535$ for 2sip-sortable permutations and $7.890$ for deque-sortable permutations. Indeed, this even improves on their lower bound $8.156$ for the growth rate of permutations sortable by two stacks in series, since any permutation which can be sorted by two stacks in parallel can be sorted by two stacks in series. To derive our new bounds, we use the fact that the class of deque-sortable permutations is sum-closed, that is, if $\tau=\tau_{1}\tau_{2}\cdots\tau_{m}$ and $\pi=\pi_{1}\pi_{2}\cdots\pi_{n}$ are deque-sortable permutations (written in series form), then so is their direct sum $\tau\oplus\pi=\tau_{1}\tau_{2}\cdots\tau_{m}(\pi_{1}+m)(\pi_{2}+m)\cdots(\pi_{n}+m)$. Equivalently, concatenating two deque-walks yields another deque-walk. It follows that we can write
\[D(t)=\frac{1}{1-V(t)},\]
where $V(t)$ is the generating function for {\em primitive} deque-sortable permutations (or equivalently, for deque-walks that only visit the line $x+y=0$ at their endpoints). Recall that we computed the first $N=1336$ coefficients of the generating functions $D(t)$ of deque-sortable permutations \cite{EG17_deque}. Using these coefficients, we can compute that first $N$ coefficients of the generating function $V(t)$ of primitive deque-sortable permutations, using the equation above. The degree $N$ polynomial
\[V_{\leq N}(t)=\sum_{n=0}^{N}t^n([t^n]V(t))\]
satisfies $V_{\leq N}(t)\leq V(t)$ as $V(t)$ has non-negative coefficients. Hence,
\[D(t)\geq\frac{1}{1-V_{\leq N}(t)}.\]
For $N=1336$, the radius of convergence of the right hand side of the inequality is $t_{c}\approx0.1217$, so $1/t_{c}=8.21927\ldots$ is a lower bound for the growth rate $\mu$ of deque-sortable permutations.

Albert, Atkinson and Linton \cite{AAL10} also gave an upper bounds of $8.346$ for the growth rate of 2sip-sortable permutations and $8.352$ for deque-sortable permutations. Hence, using Theorem \ref{mainthm} this implies that the shared growth rate $\mu$ for these two classes lies in the interval $(8.219,8.346)$. These are to be compared to the estimated value $\mu\approx 8.281402207$ given by E.P. and Guttmann \cite{EG17_deque}. By contrast, the growth rate $\mu_{s}$ for 2sis-sortable permutations is now known to lie in the interval $(8.219,13.374)$ \cite{AAL10}, both of which are a long way from the estimate $\mu_{s}\approx 12.5$ given in \cite{EG17_2sis}.

\section*{Acknowledgments}
I would like to thank Anthony J Guttmann and Michael Wallner for carefully reading early versions of this article. I would also like to thank Robert Brignall for making a suggestion after my talk at pp2017 which led to my proving the main result of this article.

\end{document}